\definecolor{webgreen}{rgb}{0,.5,0}
\definecolor{webbrown}{rgb}{.6,0,0}
\begin{document}

\theoremstyle{plain}
\newtheorem{theorem}{Theorem}
\newtheorem{corollary}[theorem]{Corollary}
\newtheorem{lemma}[theorem]{Lemma}
\newtheorem{proposition}[theorem]{Proposition}

\theoremstyle{definition}
\newtheorem{definition}[theorem]{Definition}
\newtheorem{example}[theorem]{Example}
\newtheorem{conjecture}[theorem]{Conjecture}
\newtheorem{problem}[theorem]{Problem}

\theoremstyle{remark}
\newtheorem{remark}[theorem]{Remark}

\begin{center}
\vskip 1cm{\LARGE\bf
Another Identity for Complete Bell Polynomials based on Ramanujan's Congruences
}
\vskip 1cm
\large
Ho-Hon Leung\\
Department of Mathematical Sciences \\
United Arab Emirates University \\
Al Ain, 15551\\
United Arab Emirates\\
\href{mailto:hohon.leung@uaeu.ac.ae}{\tt hohon.leung@uaeu.ac.ae}\\
\end{center}

\begin{abstract}
Let $p(n)$ be the number of partitions of a positive integer $n$. We derive a new identity for complete Bell polynomials based on a generating function of $p(7n+5)$ given by Ramanujan.
\end{abstract}

\section{Introduction}  \label{section1}

Let $(x_1, x_2, \dots)$ be a sequence of real numbers. The {\it partial} exponential Bell polynomials are polynomials given by 
\begin{align*}
B_{n,k}(x_1,x_2,\dots,x_{n-k+1}) &= \sum_{\pi(n,k)} \frac{n!}{j_1 ! j_2 ! \dots j_{n-k+1}!} \Big(\frac{x_1}{1!} \Big)^{j_1} \Big(\frac{x_2}{2!} \Big)^{j_2}\dots \Big(\frac{x_{n-k+1}}{(n-k+1)!} \Big)^{j_{n-k+1}}
\end{align*}where $\pi (n,k)$ is the positive integer sequence $(j_1, j_2, j_{n-k+1})$ satisfies the following equations: 
\begin{align*}
j_1 +j_2 +\dots +j_{n-k+1} &=k, \\
j_1 + 2 j_2 + \dots + (n-k+1) j_{n-k+1} &=n.
\end{align*}For $n\geq 1$, the $n^{\text{th}}$-complete exponential Bell polynomial $B_n(x_1, \dots, x_n)$ is the following:
\begin{align*}
  B_n (x_1\dots x_n)  &=\sum_{k=1}^n B_{n,k} (x_1, \dots , x_{n-k+1}).
\end{align*}The complete exponential Bell polynomials can also be defined by power series expansion as follows:
\begin{align}
  \label{equation1}  \text{exp}\Big( \sum_{m=1}^\infty x_m \frac{t^m}{m!} \Big) &= \sum_{n=0}^\infty B_n (x_1, \dots , x_n) \frac{t^n}{n!},
\end{align}where $B_0\equiv 1$. Bell polynomials were first introduced by Bell \cite{Bell}. The books written by Comtet \cite{Comtet} and Riordan \cite{Riordan} serve as excellent references for the numerous applications of Bell polynomials in combinatorics. 

Let $(a;q)_n$ be the $q$-Pochhammer symbol for $n\geq 1$. That is, 
\[ (a;q)_n:=\prod_{k=0}^{n-1} (1-aq^k)=(1-a)(1-aq)(1-aq^2)\dots (1-aq^{n-1}).\]Considered as a formal power series in $q$, the definition of $q$-Pochhammer symbol can be extended to an infinite product. That is, \[(a;q)_\infty := \prod_{k=0}^\infty (1-aq^k).\]We note that $(q;q)_\infty$ is the Euler's function. Let $p(n)$ be the number of partitions of $n$. The generating function of $p(n)$ can be written as 
\begin{align*}
 \sum_{n=0}^\infty p(n) q^n &= \frac{1}{(q;q)_\infty}.
\end{align*}Andrew's book \cite{Andrews} serves as an excellent reference to the theory of partitions. 

Ramanujan's congruences are congruence properties for $p(n)$: 
\[ p(5k+4) \equiv 0  \text{ (mod $5$)};  \quad p(7k+5)\equiv 0 \text{ (mod $7$)}; \quad p(11k+6)\equiv 0 \text{ (mod $11$)}.\]In 1919, Ramanujan \cite{Ramanujan} proved the first two congruences by the following two identities: 
\begin{align}
\label{equation2}\sum_{k=0}^\infty p(5k+4) q^k &= 5 \frac{(q^5;q^5)_{\infty}^5}{(q;q)_{\infty}^6}, \\
\label{equation3}\sum_{k=0}^\infty p(7k+5) x^k &= 7\frac{(q^7;q^7)_{\infty}^3}{(q;q)_{\infty}^4}+ 49x \frac{(q^7;q^7)_{\infty}^7}{(q;q)_{\infty}^8}.
\end{align}

Bouroubi and Benyahia-Tani \cite{Sadek} proved an identity for complete Bell polynomials based on (\ref{equation2}). As an analogue to their result, we give an identity for complete Bell polynomials based on (\ref{equation3}). In other words, we derive formulas that relate $p(7n+5)$ and certain complete Bell polynomials.

\section{Main Theorem} \label{section2}

Let $\sigma(n)$ be the sum of divisors (including $1$ and $n$) for $n$. It is well known that $\sigma(n)$ is a multiplicative function. That is, if $n$ and $m$ are coprime, then
\begin{align}
\label{equation4} \sigma(mn)&=\sigma(m) \sigma(n).
\end{align}If $m\geq 1$, then 
\begin{align}
\label{equation5} \sigma(p^m)&=1+p+\dots +p^{m-1}+p^m=\frac{p^{m+1}-1}{p-1}.
\end{align}

\begin{lemma} \label{lemma1}
Let $n=7^m n'$ such that $m\geq 1$ and $\text{g.c.d.}(n,n')=1$. Then 
\begin{align*}
\sigma(n) &= \frac{7^{m+1}-1}{7^m -1} \sigma\big( \frac{n}{7}\big).
\end{align*}
\end{lemma}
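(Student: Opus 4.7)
The plan is a direct computation using the two formulas (\ref{equation4}) and (\ref{equation5}) already recorded for $\sigma$. I first note that the stated hypothesis ``$\gcd(n,n')=1$'' must be read as $\gcd(7,n')=1$ (i.e., $7\nmid n'$), since $n'\mid n$ would otherwise force $n'=1$ and trivialise the claim. Under this reading, $7^{m}$ is the exact power of $7$ dividing $n$, so in particular $\gcd(7^{m-1},n')=1$ as well, and both $n$ and $n/7=7^{m-1}n'$ have coprime prime-power/coprime-part decompositions to which multiplicativity applies.

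Next, I apply (\ref{equation4}) to each of $n$ and $n/7$ separately:
\begin{align*}
\sigma(n) &= \sigma(7^{m})\,\sigma(n'), & \sigma(n/7) &= \sigma(7^{m-1})\,\sigma(n').
\end{align*}
Taking the ratio, the $\sigma(n')$ factors cancel, so
\[
\frac{\sigma(n)}{\sigma(n/7)} \;=\; \frac{\sigma(7^{m})}{\sigma(7^{m-1})}.
\]
Finally, using (\ref{equation5}) with $p=7$, the right-hand side equals $\dfrac{(7^{m+1}-1)/6}{(7^{m}-1)/6}=\dfrac{7^{m+1}-1}{7^{m}-1}$, which rearranges to the claimed identity.

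There is no real obstacle here; the computation is two lines once multiplicativity is invoked. The only point requiring care is the interpretation of the coprimality hypothesis — i.e., recognising that the intended assumption is $7\nmid n'$ so that $7^{m}\|n$ — after which the argument is a straightforward cancellation of the common factor $\sigma(n')$.
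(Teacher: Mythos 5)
Your proof is correct and follows essentially the same route as the paper: apply multiplicativity to $n=7^{m}n'$ and $n/7=7^{m-1}n'$, cancel the common factor $\sigma(n')$ (the paper writes it as $\sigma(n/7^{m})$), and evaluate $\sigma(7^{m})/\sigma(7^{m-1})$ via the geometric-sum formula. Your remark that the hypothesis should be read as $7\nmid n'$ (so $7^{m}$ is the exact power of $7$ dividing $n$) is a sensible clarification of the paper's wording and does not change the argument.
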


\begin{proof}By (\ref{equation4}) and (\ref{equation5}),
\begin{align}
\label{equation6} \sigma(n) &= \sigma (7^m) \sigma(n')=\frac{7^{m+1}-1}{6} \sigma\Big(\frac{n}{7^m}\Big), \\
\label{equation7} \sigma\Big(\frac{n}{7} \Big) &= \sigma \Big( 7^{m-1} \Big) \sigma\Big(\frac{n}{7^m} \Big) =\frac{7^m -1}{6}\sigma\Big( \frac{n}{7^m}\Big).
\end{align}By a combination of (\ref{equation6}) and (\ref{equation7}), we get the desired result.
\end{proof}

\begin{theorem}
Let $n\geq 1$. We write $n=7^m n'$ where $m\geq 0$ and $\text{g.c.d.}(n,n')=1$. Let $d_n$ and $e_n$ be the following sequences of numbers respectively:
\begin{align*}
d_n &= \frac{\sigma(n)}{n} \Big(1+\frac{18}{7^{m+1}-1}\Big),\\
e_n &= \frac{\sigma(n)}{n}\Big( 1+\frac{42}{7^{m+1}-1} \Big).
\end{align*} Then we have the following identity: 
\begin{align*}
 7B_n(1!d_1, 2!d_2,\dots, n!d_n)+49n B_{n-1}(1!e_1, 2!e_2 , \dots, (n-1)!e_{n-1}) &= n! p(7n+5).
\end{align*}
\end{theorem}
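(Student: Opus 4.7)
The plan is to rewrite each of the two rational expressions in Ramanujan's identity (\ref{equation3}) in the exponential form required by (\ref{equation1}), thereby converting the right-hand side into a generating function whose coefficients are explicit Bell polynomials in $d_n$ and $e_n$. Matching coefficients of $q^n$ will then yield the desired identity.

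First, I would compute $\log (q;q)_\infty$ by expanding each $\log(1-q^k) = -\sum_{j\ge 1} q^{jk}/j$ and regrouping by $n = jk$, which gives
\[
\log (q;q)_\infty \;=\; -\sum_{n=1}^\infty \frac{\sigma(n)}{n}\, q^n,
\qquad
\log (q^7;q^7)_\infty \;=\; -\sum_{n=1}^\infty \frac{\sigma(n)}{n}\, q^{7n}.
\]
From these I would get
\[
\log \frac{(q^7;q^7)_\infty^3}{(q;q)_\infty^4}
\;=\; 4\sum_{n\ge 1}\frac{\sigma(n)}{n}q^n \;-\; 3\sum_{n\ge 1}\frac{\sigma(n)}{n}q^{7n},
\]
and the analogous expression with coefficients $8$ and $-7$ for the second eta quotient.

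The central algebraic step is to show that the coefficient of $q^n$ in the first of these log-series is exactly $d_n$, and that the coefficient in the second is exactly $e_n$. For $n$ coprime to $7$ this is immediate ($d_n = 4\sigma(n)/n$, $e_n = 8\sigma(n)/n$). For $n = 7^m n'$ with $m\ge 1$, the coefficient equals $(4\sigma(n) - 21\,\sigma(n/7))/n$, and applying Lemma~\ref{lemma1} to replace $\sigma(n/7)$ by $\tfrac{7^m-1}{7^{m+1}-1}\sigma(n)$ and simplifying gives $\tfrac{\sigma(n)}{n}\cdot\tfrac{7^{m+1}+17}{7^{m+1}-1}$, which equals $d_n$. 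The analogous simplification produces $e_n$ in the second case. This matching of coefficients, specifically seeing how the constants $18$ and $42$ arise after applying Lemma~\ref{lemma1}, is the main obstacle and the heart of the proof.

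Once the coefficient identifications are established, I would invoke (\ref{equation1}) with $x_m = m!\,d_m$ and $x_m = m!\,e_m$ respectively to obtain
\[
\frac{(q^7;q^7)_\infty^3}{(q;q)_\infty^4} = \sum_{n\ge 0}\frac{B_n(1!d_1,\dots,n!d_n)}{n!}\,q^n,
\qquad
\frac{(q^7;q^7)_\infty^7}{(q;q)_\infty^8} = \sum_{n\ge 0}\frac{B_n(1!e_1,\dots,n!e_n)}{n!}\,q^n.
\]
Substituting these into (\ref{equation3}) (after correcting the $x$ to $q$), reading off the coefficient of $q^n$, and multiplying through by $n!$ yields
\[
n!\,p(7n+5) \;=\; 7\,B_n(1!d_1,\dots,n!d_n) \;+\; 49 n\,B_{n-1}(1!e_1,\dots,(n-1)!e_{n-1}),
\]
which is the claimed identity. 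The routine pieces are the logarithm computation and the coefficient comparison; the one nontrivial manipulation is the reduction via Lemma~\ref{lemma1}.
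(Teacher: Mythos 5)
Your proposal is correct and follows essentially the same route as the paper: taking logarithms of the two eta quotients, identifying the coefficients as divisor sums, reducing them to $d_n$ and $e_n$ via Lemma~\ref{lemma1} (your simplification to $\tfrac{\sigma(n)}{n}\cdot\tfrac{7^{m+1}+17}{7^{m+1}-1}$ indeed equals $d_n$, and likewise for $e_n$), exponentiating via (\ref{equation1}) to get complete Bell polynomials, and matching coefficients in (\ref{equation3}), with the shift from $49q$ correctly producing the factor $49\,n\,B_{n-1}$.
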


\begin{proof}
Let $G(x)$ and $H(x)$ be the following functions:
\begin{align}
\label{equation8} G(x) &= 7 \frac{(x^7 ; x^7)_\infty^3}{(x;x)_\infty^4}, \\
\label{equation9} H(x) &= 49x \frac{(x^7;x^7)_\infty^7}{(x;x)_\infty^8}.
\end{align}$G(x)$ and $H(x)$ are well-defined on the interior of the unit disk in the complex plane by analytic continuation. We get the following two equations by (\ref{equation8}) and (\ref{equation9}), 
\begin{align*}
\ln (G(x)) &= \ln 7 -3 \sum_{i=1}^\infty \ln(1-x^{7i}) +4 \sum_{i=1}^\infty \ln (1-x^i), \\
 \ln (H(x)) &= \ln 49 +\ln x -7 \sum_{i=1}^\infty \ln (1-x^{7i}) + 8 \sum_{i=1}^\infty \ln (1-x^i).
\end{align*}By using the power series expansion of $\ln (1-x)$, we get 
\begin{align}
\label{equation10} \ln (G(x)) &= \ln 7-3 \sum_{i=1}^\infty \sum_{j=1}^\infty \frac{x^{7ij}}{j} +4\sum_{i=1}^\infty \sum_{j=1}^\infty \frac{x^{ij}}{j}, \\
\label{equation11} \ln (H(x)) &= \ln 49 +\ln x -7\sum_{i=1}^\infty \sum_{j=1}^\infty \frac{x^{7ij}}{j} +8 \sum_{i=1}^\infty \sum_{j=1}^\infty \frac{x^{ij}}{j}.
\end{align}Let $f_1(x)$ and $f_2(x)$ be the following two functions: 
\begin{align}
\label{equation12}  f_1(x) &= \sum_{i=1}^\infty \sum_{j=1}^\infty \frac{x^{7ij}}{j}, \\
\label{equation13} f_2(x) &= \sum_{i=1}^\infty \sum_{j=1}^\infty \frac{x^{ij}}{j}.
\end{align}$f_1(x)$ has non-zero coefficients for $x^m$ if and only if $m$ is a multiple of $7$. More precisely, let 
\begin{align*}
f_1(x) &= \sum_{i=1}^\infty a_i x^i.
\end{align*}Then, 
\begin{align}
 \label{equation14} a_i &=
\begin{cases}
    \frac{\sigma(i/7)}{i/7},& \text{if }  7| i; \\
    0,              & \text{otherwise.}
\end{cases}
\end{align}We write $f_2(x)$ as 
\begin{align*}
  f_2(x) &= \sum_{i=1}^\infty b_i x^i
\end{align*}where 
\begin{align}
 \label{equation15} b_i &= \frac{\sigma(i)}{i}.
\end{align}By (\ref{equation10}), (\ref{equation12}), (\ref{equation13}), (\ref{equation14}), (\ref{equation15}), we get 
\begin{align}
\label{equation16} \ln (G(x))  &=\ln 7 +\sum_{i=1}^\infty d_i x^i 
\end{align}where 
\begin{align}
\label{equation17}  d_i &= 
\begin{cases}
    \frac{4\sigma(i)}{i}-\frac{3\sigma(i/7)}{i/7},& \text{if } 7|i;\\
    \frac{4\sigma(i)}{i},              & \text{otherwise.}
\end{cases}
\end{align}By Lemma \ref{lemma1}, we write (\ref{equation17}) as 
\begin{align}
\label{equation18} d_i &= \frac{\sigma(i)}{i} \Big( 1+\frac{18}{7^{m+1}-1} \Big)
\end{align}for $i=7^m i'$, $m\geq 0$. Similarly, by (\ref{equation11}), (\ref{equation12}), (\ref{equation13}), (\ref{equation14}), (\ref{equation15}), we get
\begin{align}
\label{equation19} \ln(H(x)) &= \ln 49 +\ln x + \sum_{i=1}^\infty e_i x^i
\end{align}where
\begin{align}
\label{equation20} e_i &= 
\begin{cases}
    \frac{8\sigma(i)}{i}-\frac{7\sigma(i/7)}{i/7},& \text{if } 7|i;\\
    \frac{8\sigma(i)}{i},              & \text{otherwise.}
\end{cases}
\end{align}By Lemma \ref{lemma1}, we write (\ref{equation20}) as 
\begin{align}
\label{equation21} e_i &= \frac{\sigma(i)}{i} \Big( 1+\frac{42}{7^{m+1}-1} \Big)
\end{align}for $i=7^m i'$, $m\geq 0$. By (\ref{equation1}), (\ref{equation16}), (\ref{equation18}), we have
\begin{align}
\nonumber G(x)&=\exp(\ln G(x)) =7\exp \Big(\sum_{n=1}^\infty d_n x^n  \Big)=7\exp\Big(\sum_{n=1}^\infty (n! d_n)\frac{x^n}{n!} \Big)  \\
\label{equation22}  &= 7\Big(\sum_{n=0}^\infty B_n (1! d_1, 2! d_2, \dots, n! d_n)\frac{x^n}{n!}\Big).
\end{align}By (\ref{equation1}), (\ref{equation19}), (\ref{equation21}), we have
\begin{align}
\nonumber H(x) &= \exp(\ln H(x)) = 49x \exp \Big(\sum_{n=1}^\infty e_n x^n \Big)=49x\exp \Big( \sum_{n=1}^\infty (n! e_n)\frac{x^n}{n!} \Big) \\
\nonumber  &= 49x \Big( \sum_{n=0}^\infty B_n (1!e_1, 2!e_2, \dots, n!e_n)\frac{x^n}{n!} \Big) \\
\label{equation23}  &= 49 \Big(\sum_{n=1}^\infty B_{n-1}(1!e_1, 2!e_2,\dots, (n-1)! e_{n-1})\frac{x^n}{(n-1)!}  \Big).
\end{align}By (\ref{equation3}), (\ref{equation8}), (\ref{equation9}), (\ref{equation22}), (\ref{equation23}), we have the following identity:
\begin{align}
\nonumber \sum_{n=0}^\infty p(7n+5) x^n&= 7+\sum_{n=1}^\infty \Big( 7B_n(1!d_1, 2!d_2,\dots, n!d_n)+49n B_{n-1} (1!e_1, 2!e_2, \dots, (n-1)! e_{n-1})     \Big) \frac{x^n}{n!} 
\end{align}as desired.
\end{proof}

\section{Acknowledgement}
The author is grateful to Victor Bovdi for pointing out some references about this topic. The author is also grateful to Mohamed El Bachraoui for some valuable discussions on the topic. The author is supported by Startup-Grant-2016 (fund no. 31S263) from United Arab Emirates University.

\bigskip
\hrule
\bigskip

\noindent 2010 {\it Mathematics Subject Classification}: 05A17, 11P81.

\noindent \emph{Keywords: } Bell polynomials, integer partitions, divisor sums, Ramanujan's congruences 

\bigskip
\hrule
\bigskip

\end{document}